\def\@tempa{:}
\DeclareRobustCommand{\qed}{%
  \ifmmode 
  \else \leavevmode\unskip\penalty9999 \hbox{}\nobreak\hfill
  \fi
  \quad\hbox{\qedsymbol}}
\newcommand{\emptysetpenbox}{\leavevmode
  \hbox to.77778em{%
  \hfil\vrule
  \vbox to.675em{\hrule width.6em\vfil\hrule}%
  \vrule\hfil}}
\newcommand{\qedsymbol}{\emptysetpenbox}
\newenvironment{proof}[1][\proofname]{\par
  \normalfont
  \topsep6\p@\@plus6\p@ \trivlist
  \item[\hskip\labelsep\itshape
    #1\@addpunct{.}]\ignorespaces
}{%
  \qed\endtrivlist
}
\newcommand{\proofname}{Proof}
\newenvironment{exemple*}[1][\examplename]{\par
  \normalfont
  \topsep6\p@\@plus6\p@ \trivlist
  \item[\hskip\labelsep\itshape\bfseries
     #1\@addpunct{.}]\ignorespaces\footnotesize
}{%
  \hfill $\clubsuit$\endtrivlist
}
\author{P. Moyal \footnote{Laboratoire de Mathématiques Appliquées - 
Université de Technologie de Compiègne. 
Centre de Recherches de Royallieu, 
BP 20 529,  
60 205 COMPIEGNE Cedex, 
FRANCE.\,\,\,\, \emph{e-mail: moyalpas@dma.utc.fr}}\\
\small{\emph{LMAC - UTC Compiègne}}}
\title{Convex comparison of service disciplines in real time queues}
\newcommand\car{{\mathbf 1}}
\newcommand\N{{\mathbb N}}
\newcommand{\R}{{\mathbb R}}
\newcommand{\F}{{\mathcal F}}
\newcommand\esp[1]{{\mathbf E}\left[#1\right]}
\newcommand\espp[1]{{\mathbf E}^0\left[#1\right]}
\newcommand\prp[1]{{\mathbf P}^0\left[#1\right]}
\newcommand\pp{{\mathbf P^0}}
\newcommand\mpp{{\check{\mathbf P}^0}}
\def\N{{\mathbb N}}
\def\Z{{\mathbb Z}}
\newcommand\procR[1]{\left(#1_t\right)_{t \in \R}}
\newcommand\suiten[1]{\left\{#1\right\}_{n \in \N}}
\newcommand\suitez[1]{\left\{#1_n\right\}_{n \in \Z}}
\newtheorem{theorem}{Theorem}
\newtheorem{lemma}{Lemma}
\newtheorem{cor}{Corollary}{\bf}{\it}
{\theorembodyfont{\upshape}
{\theorembodyfont{\upshape\small}

}
\setcounter{tocdepth}{2}
\date{ }
\begin{document}
\maketitle
\date{ }

\begin{abstract}
We present a comparison of the service disciplines in real-time queueing systems (the customers have a deadline before which they should enter the service booth). We state that giving priority to customers having an early deadline minimizes the average stationary lateness. 
We show this result by comparing adequate random vectors with the Schur-Convex majorization ordering.  
\end{abstract}
\emph{keywords}: Queues with deadline; Earliest Deadline First; Stochastic comparison; Schur-convex ordering.\\\\ 
\section{Introduction}
In real-time queuing theory, each customer entering the queueing system is not only identified by his arrival time and service duration but also by a
\textsl{deadline}. In other words, a given customer has a given period of
time (his {patience}, {i.e.}, the remaining time before his deadline) during which he should enter the service booth. This patience then decreases at unit 
rate as time goes on. If the deadline is reached before the customer could enter service, the customer is either lost
and the deadline is said \emph{hard}, or he is kept in the waiting line and the deadline is said \emph{soft}. 
Hereafter we draw a comparison of the service disciplines in the soft deadline case. The performance metrics we focus on is the lateness 
of the customers with respect to their deadlines.

In both deterministic and random environments, it appears that the Earliest-Deadline-First (EDF
for short) discipline, which consists in giving priority to the customers having the nearest deadline, is optimal. 
It is the more feasible in the static case (see \cite{Dertou74}): if some preemptive discipline can serve the customers of a given scenario 
without any lateness, then preemptive EDF also does. It has been shown independently in \cite{LiuLay73} and \cite{SorHam75} that preemptive 
EDF is also optimal for job loss in the hard deadline context. More recently, 
\cite{StoGeo92} have proved that the soft preemptive EDF minimizes the lateness and tardiness of all the tasks in the system at a given time. 
In the hard deadline context and when randomness is assumed, non-preemptive EDF ensures the smallest loss for the strong stochastic ordering 
(see \cite{PanTow88}) and the least failure probability, i.e. it minimizes the stationary probability of being lost before service 
(\cite{moyal05}). It is shown in \cite{BhaEph89} that EDF also minimizes stochastically the number of lost customers among preemptive disciplines. 
In the soft deadline case, non-preemptive EDF also minimizes the largest lateness among all the customers 
in the system at a given time (or at a given finite set of times), for the strong stochastic ordering (see \cite{StoGeo92}).

In this paper, we consider the soft deadline case, and state that non-preemptive EDF also minimizes the mean stationary lateness 
of the customers. Our main result (Theorem \ref{thm:main}) is in fact more general, and establishes that the more the scheduler gives priority to customers who are close to their deadline, the smallest the mean expectation (in the Palm sense) of any convex function of the stationary lateness. 
The main mathematical tool used in the proof is the Schur-Convex comparison of random vectors in the Palm space of the arrival process. 
This framework is convenient for the comparison of service disciplines in other cases: the optimality of the SRPT discipline for the residual service vector (see \cite{Fli81}) and that of FIFO for the waiting time (see {e.g.} \cite{BacBre02}) have been shown in that sense. 
 
This paper is organized as follows. In section \ref{sec:order} we recall the main properties of the Schur-Convex majorization ordering. 
In section \ref{sec:prelim} we make precise the definitions, assumptions and notation on the G/GI/1 queueing station we consider. In section \ref{sec:result} we state and prove our main result. 
We conclude this paper with some comments in section \ref{sec:comments}.

\section{The majorization ordering on $\R^n$}
\label{sec:order}
Let us recall the definition and main properties of the \emph{majorization} partial semi-ordering on the Euclidian space $\R^n$. 
Let $\Gamma\left(1,...,n\right)$ denote the set of permutation of $\left\{1,...,n\right\}$ and for any vector $X=\left(X_1,X_2,...,X_n\right)$ and any $\gamma \in \Gamma\left(1,...,n\right)$, 
denote $X_{\gamma}:=\left(X_{\gamma(1)},X_{\gamma(2)},...,X_{\gamma(n)}\right)$. A function $F$ from $\R^n$ to some space $S$ is said symmetric whenever $F(X)=F\left(X_{\gamma}\right)$ for any $X \in \R^n$ and any $\gamma \in \Gamma\left(1,...,n\right)$. The \emph{fully ordered} version of any $X \in \R^n$ is $X_{\alpha}$ such that $X_{\alpha(i)}\le X_{\alpha(j)}$ for any $1\le i < j \le n$. We say that $\gamma \in \Gamma\left(1,...,n\right)$ is a \emph{reordering} permutation of $X \in \R^n$ if $X$ is not fully ordered and $\gamma(i)=j$ and $\gamma(j)=i$ for some $i<j$ such that $X_i > X_j$. 
  
Let $X,\,Y \in \R^n$ and $\alpha,\,\beta \in \Gamma\left(1,...,n\right)$. Denote $X_{\alpha}$ and $Y_{\beta}$ the fully ordered versions of $X$ and $Y$, respectively. Then we say that $X \prec Y$ if
\[\left\{\begin{array}{ll}
\displaystyle\sum_{i=1}^n X_i=\displaystyle\sum_{i=1}^n Y_i,&\\
\displaystyle\sum_{i=k}^n X_{\alpha(i)} \le \displaystyle\sum_{i=k}^n Y_{\beta(i)},&k=2,...,n.
\end{array}\right.\]
The properties of $\prec$ are thoroughly presented in \cite{Arnold87} and \cite{MarOl79}. 
Let us quote the following ones, which will be used in the sequel.
\begin{equation}
\label{eq:opposée}
X \prec Y \Longleftrightarrow -X \prec -Y,
\end{equation}
where e.g. $-X$ denotes the vector whose coordinates are the opposite of that of $X$, 
\begin{equation}
\label{eq:convex}
X \prec Y \Longleftrightarrow F(X)\le F(Y) \mbox{ for any convex symmetric real function }F.
\end{equation}
For any fully ordered $Z \in \R^n$ and any $\gamma$ that reorders $X$,   
\begin{equation}
\label{eq:lemma}
X_{\gamma}-Z \prec X - Z
\end{equation}
(see for instance \cite{BacBre02}, Lemma 4.1.1 p.266).

\section{The queue with smooth deadlines}
\label{sec:prelim}
Let $\left(\Omega,\F,\mathbf P,\theta_t\right)$ be a probability space furnished with a bijective flow $\procR{\theta}$, under which $\mathbf P$ is stationary and ergodic. Consider a single-server queueing station with infinite buffer, fed by a G/GI input: 
the arrival process 
$\procR{N}$ of points $...<T_{-1}<T_{0}\le 0 <T_1<T_2<...$ is compatible with $\procR{\theta}$ and hence its increments, or interarrivals 
$\xi_n:=T_{n+1}-T_n$, $n \in \Z$ form a stationary ergodic sequence of random variables (r.v. for short). The generic interarrival $\xi$ is assumed to be integrable. We denote for all $n \in \Z$, $C_n$ the $n$-th customer in the order of arrivals (i.e. entered at time $T_n$) and $\sigma_n$, the service duration requested by $C_n$. We assume furthermore that a smooth deadline $D_n$ is assigned to the customer $C_n$, before which he should begin his service. The \emph{initial patience} of $C_n$ (i.e. the time before his deadline) is hence given by $P_n:=D_n-T_n$. The sequences 
$\suitez{\sigma}$ and $\suitez{P}$ are sequences of marks of $\procR{N}$. Under these settings one can define the Palm space $\left(\Omega,\F,\pp, \theta\right)$ of $\procR{N}$, which is such that $\prp{T_0=0}=1$ and where $\theta:=\theta_{T_1}$ is a bijective stationary ergodic discrete flow. 
In particular, $\suitez{\sigma}$ and $\suitez{P}$ are stationary in that $\pp$-.a.s, $\sigma_n=\sigma_0\circ\theta^n$ and $P_n=P_0\circ\theta^n$, $n \in \Z$.  We assume furthermore that $\suitez{\sigma}$ is i.i.d., independent of $\procR{N}$ and $\suitez{P}$, and that the r.v. $\sigma:=\sigma_0$ and $P:=P_0$ are integrable. 
The deadlines are soft, thus any customer agrees to wait in the system as long as his service is not completed, even though he passed his deadline. 
Therefore the system is conservative (any entered customer is eventually served), and Loynes' stability condition for such a system, 
$$\espp{\sigma}/\espp{\xi}<1,$$ 
is assumed to hold. 
We denote by $W_n$, the waiting time of $C_n$ (the time he must wait before entering service), and $B_n=T_n+W_n$ the instant in which $C_n$ begins his service.
At any time $t \ge T_n$, the \emph{residual patience} (i.e. the remaining time before the deadline) of $C_n$ at time $t$ is given by $R_n(t)=D_n-t$. 
The residual patience of $C_n$ at the time he begins his service thus reads $R_n:=R_n\left(B_n\right)$.
Hence the \emph{lateness} (if any) of $C_n$ with respect to his deadline is given by 
$$L_n=\left(R_n\right)^-:=-\min\left(R_n,0\right).$$

Let $\Phi$ and $\Psi$ denote two admissible non-preemptive service discipline, i.e. two policies followed by the server when choosing the customer to serve, depending only on the state of the system at this instant (not on the already served customers, nor on the future customers to enter). We recall, that a discipline is said non-preemptive when the server always completes an initiated service even when a priority customer enters the system in the meanwhile. 
The compliance of the service discipline with respect to the real-time constraint is characterized as follows.  
We write $\Phi \ll \Psi$, whenever $\Phi$ always chooses a customer having a deadline earlier (and maybe already passed!) than that of the customer chosen by $\Psi$, in the case where both customers are present in the system and the service booth is free (i.e. at this instant $t$, if $\Phi$ chooses $C_i$ and $\Psi$ chooses $C_j$, then $R_i(t)\le R_j(t)$).    
In particular the \emph{Earliest Deadline First} (EDF) service discipline always gives priority to the customer having the earliest deadline, and 
the \emph{Latest Deadline First} (LDF) one gives priority to the customer having the latest deadline. Then, by definition
$$EDF \ll \Phi\ \ll LDF\mbox{ for any admissible discipline }\Phi.$$  
 

\section{Comparison of service disciplines}
\label{sec:result}
Let us emphasize for a given system the dependence on the service discipline by adding when necessary the superscript $^{\Phi}$ whenever the service discipline is $\Phi$. 
Since the stability condition is assumed to hold, their exists a stationary (i.e. compatible with $\theta$) waiting time $W^{\Phi}$ under discipline $\Phi$. Hence there exists a stationary residual patience when entering service and a stationary lateness, which are respectively given by
$$R^{\Phi}:=P-W^{\Phi}$$
and 
$$L^{\Phi}=\left(R^{\Phi}\right)^-.$$
We now state our main result. 

\begin{theorem}
\label{thm:main}
For any convex function $g:\R \to \R$, 
\begin{equation}
\label{eq:begin}
\espp{g\left(R^{\Phi}\right)} \le \espp{g\left(R^{\Psi}\right)}\mbox{ whenever } \Phi\ll \Psi.
\end{equation}
\end{theorem}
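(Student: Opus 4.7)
The plan is to reduce the Palm inequality to a pathwise Schur-convex comparison busy-period by busy-period, and then to average via Birkhoff's ergodic theorem. The key observation is that, within a busy period, the residual patience of each customer equals his deadline minus the start time of his service; once the multiset of start times in the period is fixed, the vector $R$ depends only on the pairing of customers with service slots, and the better this pairing is aligned with the sorted order of deadlines, the less spread $R$ is in the sense of~$\prec$.

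To fix the multiset of start times pathwise, I would set up a coupling between the two systems. Since $\suitez{\sigma}$ is i.i.d.\ and independent of $\procR N$ and $\suitez P$, one can re-allocate service durations so that the service time used in the $l$-th service slot is a common exogenous i.i.d.\ sequence $(\tilde\sigma_l)_l$, without altering either marginal law. The slot-start times then depend only on $(T_n)$ and $(\tilde\sigma_l)$, so both systems share the same sequence of busy periods and, within each busy period of $K$ customers, the same fully ordered vector $Z:=(B_{(1)}<\dots<B_{(K)})$ of sorted start times; the deadlines $\{D_k\}$ are trivially common too. Only the vector $X^{\Phi}$ (resp.\ $X^{\Psi}$) of deadlines indexed by the order of service varies, and one has $R^{\Phi}=X^{\Phi}-Z$ and $R^{\Psi}=X^{\Psi}-Z$ with the common sum $\sum_k D_k-\sum_l B_{(l)}$, so that the ordering $\prec$ between them is meaningful.

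The core combinatorial step is then to argue, using $\Phi\ll\Psi$, that $X^{\Phi}$ is obtained from $X^{\Psi}$ by a finite chain of reordering permutations in the sense of Section~\ref{sec:order}. I would proceed by an induction on the slot index, possibly through a finite sequence of auxiliary disciplines $\Psi=\Xi_0,\Xi_1,\dots,\Xi_m=\Phi$, each obtained from the previous one by a single elementary priority swap between two customers. Iterating \eqref{eq:lemma} along this chain with $Z$ fully ordered then yields $R^{\Phi}\prec R^{\Psi}$ within every busy period. Applying \eqref{eq:convex} to the convex symmetric function $(x_1,\dots,x_K)\mapsto\sum_l g(x_l)$ delivers $\sum_{n\in\mathcal B}g(R_n^{\Phi})\le\sum_{n\in\mathcal B}g(R_n^{\Psi})$, and summing over the busy periods meeting $(0,T]$, dividing by the arrival count, and letting $T\to\infty$, Birkhoff's ergodic theorem on the Palm space yields the desired $\espp{g(R^{\Phi})}\le\espp{g(R^{\Psi})}$.

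The main technical difficulty is precisely this combinatorial step: the relation $\Phi\ll\Psi$ compares the two disciplines applied to the same state, whereas under the slot-coupling the two systems reach different waiting sets at each decision epoch, so the local choice property does not translate immediately into the global reordering statement. Interpolating through intermediate disciplines and carefully bookkeeping which customers are shifted by each elementary swap is where the real work lies.
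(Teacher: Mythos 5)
Your overall architecture coincides with the paper's: an interchange argument exploiting the i.i.d.\ service times so that both disciplines see the same sequence of service-start epochs on each busy cycle, a Schur-convex comparison of the residual-patience vectors via \req{eq:lemma}, and a cycle-formula (equivalently, ergodic-average) step to pass to the Palm expectation. Writing $R^{\Phi}=X^{\Phi}-Z$ with $Z$ the sorted slot-start times is just the transposed form of the paper's identity $R_{\alpha}^{N^{\gamma},\Phi}=D_{\alpha}^{N}-B_{\gamma\circ\alpha}^{N,\Psi}$, the two being interchangeable through the sign-reversal property of $\prec$.

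The genuine gap is the step you yourself flag as ``where the real work lies'': showing that the customer/slot pairing produced by $\Phi$ is reached from that produced by $\Psi$ by a finite chain of reordering permutations. This is exactly the paper's Lemma \ref{lemma:reorder}, and without it nothing yields $R^{\Phi}\prec R^{\Psi}$: the relation $\Phi\ll\Psi$ is a purely local statement about a single decision epoch, and some argument must convert it into the global majorization. The obstacle you raise --- that after the coupling the two systems occupy different states at later decision epochs, so $\Phi\ll\Psi$ no longer applies directly --- is resolved in the paper by examining the \emph{first} service epoch at which the two service orders diverge: up to that epoch the two systems have served the same customers in the same order with the same slot durations, hence are in the identical state, so $\Phi\ll\Psi$ does apply there; the resulting transposition $\gamma_1$ reorders the start-time vector $B_{\alpha}^{N,\Psi}$, and one iterates to obtain $\gamma=\gamma_p\circ\dots\circ\gamma_1$. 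Your alternative plan of interpolating through auxiliary disciplines $\Xi_0,\dots,\Xi_m$ is plausible but is not carried out, and it is not evident that a single elementary priority swap between two admissible disciplines perturbs the global service order by only one reordering transposition, since a swap at one epoch can cascade into all subsequent decisions. As written, your argument is a correct skeleton with its load-bearing lemma missing.
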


\begin{proof}
Let us assume that customer $C_0$ finds an empty queue upon arrival. We denote for any $k \ge 0$, $\phi(k)$ (resp. $\psi(k)$) the rank, in the increasing order of deadlines, of the $k$-th customer served under $\Phi$ (resp. $\Psi$). Let for any $j \ge 0$, $C_{\alpha(j)}$ be the $j$-th customer in the increasing order of deadlines (i.e. $D_{\alpha(i)}\le D_{\alpha(j)}$ for $i<j$),    
and for all $n\ge 0$, $\gamma(n)$ be such that the $n$-th customer served by $\Psi$ is the $\gamma(n)$-th customer served by $\Phi$. Since for all $k \ge 0$, $C_{\phi\circ\alpha(k)}$ (resp. $C_{\psi\circ\alpha(k)}$) is the $k$-th customer served under $\Phi$ (resp. $\Psi$), we have  
$$\gamma=\alpha\circ\psi\circ\phi^{-1}\circ\alpha^{-1}.$$
Let $N$ and $N^{\gamma}$ be the counting measures representing respectively the original double-marked input point process, and the input point process when the service times of the customers are rearranged following $\gamma$, i.e.
$$N:=\sum_{n \ge 0}\delta_{T_n,\sigma_n,P_n},$$
$$N^{\gamma}:=\sum_{n \ge 0}\delta_{T_n,\sigma_\gamma(n),P_n}.$$
Then, it is easily seen that
\begin{equation}
\label{eq:egalloi}
\mbox{$N$ and $N^{\gamma}$ have the same distribution}
\end{equation}  
(this is the "interchange argument" for the G/GI input, see \cite{BacBre02}, p.267). 
We add when necessary a superscript $^N$ (resp. $^{N^{\gamma}}$) whenever the input is $N$ (resp. $N^{\gamma}$).  
Let $T_{\tau}$ be the first time in which a customer enters an empty queue when starting with an empty queue just before time $T_0$. 
Note, that since the system is conservative, $T_{\tau}$ does not depend on the service discipline, nor on the order of services. 
We have for any $n \le \tau-1$
\begin{multline*}
B_{\alpha(n)}^{N^{\gamma},\Phi}=\sum_{i=1}^{\phi^{-1}(n)-1}\sigma_{\gamma\circ\alpha\circ\phi(i)}=\sum_{i=1}^{\phi^{-1}(n)-1}\sigma_{\alpha\circ\psi\circ\phi^{-1}\circ\alpha^{-1}\circ\alpha\circ\phi(i)}\\=\sum_{i=1}^{\phi^{-1}(n)-1}\sigma_{\alpha\circ\psi(i)}=B_{\gamma\circ\alpha(n)}^{N,\Psi},
\end{multline*} 
that is the instant in which customer $C_{\gamma\circ\alpha(n)}$ begins his service under $\Psi$. 
Thus 
we have 
\begin{equation}
\label{eq:alda1}
R_{\alpha}^{N^{\gamma},\Phi}=D_{\alpha}^{N^{\gamma}}-B_{\alpha}^{N^{\gamma},\Phi}=D_{\alpha}^{N^{\gamma}}-B_{\gamma\circ\alpha}^{N,\Psi}=D_{\alpha}^{N}-B_{\gamma\circ\alpha}^{N,\Psi}.
\end{equation}
Let us moreover remark that 
\begin{lemma}
\label{lemma:reorder}
$\gamma$ is a composition of reordering permutations of $B_{\alpha}^{N,\Psi}$.
\end{lemma}
\begin{proof}[Proof of Lemma \ref{lemma:reorder}]
The first integer $n$, if any, such that $\gamma(\alpha(n)) \neq \alpha(n)$ is such that at the $\alpha(n)$-th end of service under $\Psi$ (which is as well the $\alpha(n)$-th end of service under $\Phi$ since $\gamma(k)=k$ for $k=0,...,\alpha(n)-1$), there are in both systems (under $\Phi$ and $\Psi$) two customers, say $C_{i_1}$ and $C_{i_2}$ such that $D_{i_1}<D_{i_2}$, whereas the server takes care of $C_{i_1}$ under $\Phi$ and of $C_{i_2}$ under $\Psi$. In other words, 
denoting for $l=1,2$, $j_l=\alpha^{-1}(i_l)$, we have $B_{\alpha(j_2)}^{N,\Psi}<B_{\alpha(j_1)}^{N,\Psi}$ whereas $i_2=\alpha(j_2)>\alpha(j_1)=i_1$. Now, 
since $\Phi$ gives priority to $C_{i_1}$ over $C_{i_2}$, we have $\phi^{-1} (j_1)<\phi^{-1}(j_2).$ Hence from the definition of $\psi$, $$B_{\gamma\circ\alpha(j_1)}^{N,\Psi}=B_{\alpha\circ\psi\circ\phi^{-1}(j_1)}^{N,\Psi}<B_{\alpha\circ\psi\circ\phi^{-1}(j_2)}^{N,\Psi}=B_{\gamma\circ\alpha(j_2)}^{N,\Psi}.$$
Thus the permutation $\gamma_1$ interchanging $i$ and $j$ reorders $B_{\alpha}^{N,\Psi}$, and $\gamma$ reads\\ $\gamma=\gamma_p\circ...\circ\gamma_1,$ where the $\gamma_i$ are such permutations. 
\end{proof}
\noindent 
It is thus a consequence of Lemma \ref{lemma:reorder}, together with (\ref{eq:lemma}) and (\ref{eq:opposée}) that
$$D_{\alpha}^{N}-B_{\gamma\circ\alpha}^{N,\Phi}\prec D_{\alpha}^{N}-B_{\alpha}^{N,\Psi}=R_{\alpha}^{N,\Psi},$$
that is with (\ref{eq:alda1}),
$$R_{\alpha}^{N^{\gamma},\Phi}\prec R_{\alpha}^{N,\Psi}.$$
Hence from (\ref{eq:convex}), for any convex symmetric $F: \R^{\tau} \to \R$, 
$$F\left(R_{\alpha}^{N^{\gamma},\Phi}\right)\le F\left(R_{\alpha}^{N,\Psi}\right)$$
and in particular for all convex function $g:\R \to \R$, 
$$\sum_{i=1}^{\tau-1}g\left(R_{\alpha(i)}^{N^{\gamma},\Phi}\right)\le \sum_{i=1}^{\tau-1}g\left(R_{\alpha(i)}^{N,\Psi}\right).$$
Finally, taking expectations in this last inequaliy and in view of (\ref{eq:egalloi}),
\begin{multline*}
\esp{\sum_{i=1}^{\tau-1}g\left(R_i^{N,\Phi}\right)}=\esp{\sum_{i=1}^{\tau-1}g\left(R_{\alpha(i)}^{N,\Phi}\right)}\\\le \esp{\sum_{i=1}^{\tau-1}g\left(R_{\alpha(i)}^{N,\Psi}\right)}=\esp{\sum_{i=1}^{\tau-1}g\left(R_i^{N,\Psi}\right)}.
\end{multline*}
This leads to (\ref{eq:begin}) using the cycle formula of Palm probability. 
\end{proof}
\noindent
Since $g(x):=x^-$ is a convex function, we have 
\begin{cor}
\label{cor:late}
The mean lateness is minimized by EDF and maximized by LDF, i.e. for any admissible discipline $\Phi$, 
$$\espp{L^{\text{\tiny{EDF}}}}\le \espp{L^{\Phi}} \le \espp{L^{\text{\tiny{LDF}}}}.$$
\end{cor}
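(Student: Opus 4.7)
The plan is simply to apply \thmref{thm:main} to the convex function $g(x) := x^-$, leveraging the two extremal properties of the ordering $\ll$ stated at the end of Section~\ref{sec:prelim}: namely $EDF \ll \Phi$ and $\Phi \ll LDF$ for every admissible discipline $\Phi$. The positive part $x^- = \max(-x,0)$ is the maximum of the affine map $x \mapsto -x$ and the constant $0$, hence is convex on $\R$; so the hypothesis of \thmref{thm:main} is met with this choice of $g$.

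Using the pair $(EDF,\Phi)$ in the role of the pair $(\Phi,\Psi)$ of \thmref{thm:main} gives $\espp{g(R^{\text{\tiny{EDF}}})} \le \espp{g(R^{\Phi})}$, while using the pair $(\Phi,LDF)$ gives $\espp{g(R^{\Phi})} \le \espp{g(R^{\text{\tiny{LDF}}})}$. Concatenating these two inequalities and recalling the definition $L^{\Phi} := (R^{\Phi})^-$ yields
$$\espp{L^{\text{\tiny{EDF}}}} \le \espp{L^{\Phi}} \le \espp{L^{\text{\tiny{LDF}}}},$$
which is the announced sandwich. I do not anticipate any real obstacle: the corollary is a direct specialization of \thmref{thm:main}, the only substantive observation being the convexity of the positive part, and the fact that $EDF$ and $LDF$ are, by construction, the extremal elements for $\ll$ among admissible non-preemptive disciplines.
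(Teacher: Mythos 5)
Your proposal is correct and is exactly the paper's argument: the paper likewise obtains the corollary by applying Theorem~\ref{thm:main} to the convex function $g(x):=x^-$ together with the extremality $EDF \ll \Phi \ll LDF$. (Only a terminological quibble: $x^-=\max(-x,0)$ is the negative part, not the positive part, but your convexity justification as a maximum of affine maps is right.)
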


\section{Comments}
\label{sec:comments}

Let us first emphasize the differences between our results and that of \cite{StoGeo92}, Theorem 4 p.229. Corollary \ref{cor:late} states 
in particular that in the steady state, 
the lateness an arriving customer will undergo is (in average) minimized by EDF among all non-preemptive 
disciplines.  
Theorem 4 of \cite{StoGeo92} shows a complementary, but not directly related result: for a given $N$ and a given set of instants 
$t_1<t_2<...<t_N$, any increasing function (for the 
coordinatewise ordering) of the random vector $\left(Z(t_1),Z(t_2),....,Z(t_N)\right)$ is stochastically minimized 
by EDF, denoting for all $t\ge 0$, $Q(t)$ the number of customers in the systems at $t$ and 
$Z(t)=\max_{i=1}^{Q(t)} \left(-R_i(t)\right).$ In particular, this implies that for any $t\ge 0$, the maximal lateness 
(termed \emph{tardiness} in \cite{StoGeo92}) among the customers in the system at $t$ is stochastically minimized by EDF. The performance metrics 
there is the maximal lateness rather than the average lateness, as it is the case in our study.    

On another hand, remark that Corollary \ref{cor:late} is consistant with the optimality of the FIFO service discipline for the average waiting time (see e.g. \cite{BacBre02}, Property 4.1.3 p. 266), in the case where the initial patience of the customers is deterministic (say $P_n=p$, $\pp$-a.s.). Then, EDF reduces to FIFO (the first customer in the system has the earliest deadline), and the waiting time vector on a busy cycle reads $W=p-R$, where $p:=\left(p,p...,p\right)$.  Hence the Schur convex minimality of $W^{\text{\tiny{FIFO}}}$ amounts to that of $R^{\text{\tiny{FIFO}}}$, i.e. 
$R^{\text{\tiny{EDF}}}$. 


\end{document}